\documentclass[11pt,twoside]{amsart}
\usepackage{MnSymbol}
\usepackage{tikz-cd}
\usetikzlibrary{arrows.meta}
\tikzset{
  commutative diagrams/.cd,
  arrow style=tikz,
  diagrams={>={Computer Modern Rightarrow[length=5pt,width=3pt]}},
}
\usepackage{amsmath}
\usepackage{amsthm}
\usepackage{latexsym}
\usepackage{amscd}
\usepackage{ifthen}
\usepackage{graphicx}
\usepackage{enumerate}
\usepackage{soul}
\usepackage{mathrsfs}

\usepackage{hyperref}
\hypersetup{
    colorlinks = true,
    linkcolor = blue,
    filecolor = blue,      
    urlcolor = blue,
    citecolor = blue,
    breaklinks = true
}

\theoremstyle{plain}
\newtheorem{thm}{Theorem}[section]
\newtheorem{prop}[thm]{Proposition}

\newtheorem{lemma}[thm]{Lemma}

\theoremstyle{definition}
\newtheorem{defn}[thm]{Definition}

\newtheorem{rem}[thm]{Remark}

\newcommand {\Sing}{\mathrm{Sing}}
\newcommand{\Pic}{\operatorname{Pic}}
\newcommand{\Div}{\operatorname{Div}}

\newcommand{\Supp}{\operatorname{Supp}}
\newcommand{\h}{\operatorname{h}}
\newcommand{\Ho}{\operatorname{H}}
\newcommand{\ra}{\rightarrow}

\newcommand{\Hilb}{\operatorname{Hilb}}

\newcommand{\cliff}{\operatorname{Cliff}}
\newcommand{\syz}{\operatorname{Syz}}
\renewcommand{\O}{\mathcal{O}}
\newcommand{\I}{\mathcal{I}}
\renewcommand{\P}{\mathbb{P}}

\newcommand{\Spec}{\operatorname{Spec}}
\newcommand{\Sym}{\operatorname{Sym}}
\newcommand{\Hom}{\operatorname{Hom}}

\title[The bielliptic locus]{The bielliptic locus in the Hilbert scheme of canonical curves is unirational}

\author{Andrei Stoenică}

\address{A. Stoenicǎ: Faculty of Mathematics and Computer Science, University of Bucharest, Romania, and Simion Stoilow Institute of Mathematics of the Romanian Academy, Research group of the project "Cohomological Hall algebras of smooth surfaces and applications" - C.F. 44/14.11.2022} \email{andrei.stoenica@my.fmi.unibuc.ro}

\date{}

\subjclass{14H10, 14M20}

\begin{document}

\begin{abstract}
In this paper we prove the unirationality of the locus of bielliptic curves in the Hilbert scheme of canonical curves of genus $g \geq 11$. As a consequence, we obtain another proof for the unirationality of the bielliptic locus in the moduli space of curves of genus $g \geq 11$.
\end{abstract}

\maketitle

\section{Introduction}

The goal of this paper is to prove that the locus of bielliptic canonical curves inside the Hilbert scheme of genus $g \geq 11$ canonical curves in $\P^{g-1}$, \textit{i.e.} $\Hilb_{(2g-2)t + 1-g, \P^{g-1}}$, is unirational. As a consequence, we obtain another proof of the unirationality of the bielliptic locus of $M_g$, the coarse moduli space of genus $g$ curves.

A first description (at least to the author's knowledge) of the bielliptic locus in $M_g$ appears in the article \cite{art_cornalba_87}. One of its main results is a characterization of the irreducible components of the singular locus of $M_g$ (for $g \geq 4$) as loci of curves which are covers of prime degree, plus data related to the ramification. There, the bielliptic locus appears as one of these irreducible components and it has dimension $2g-2$. An improvement of the results in that paper appears in \cite{art_catanese_2010}. Later, in a series of papers by Bardelli, Casnati and Del Centina (\cite{art_biell_g3}, \cite{art_biell_g4}, \cite{art_biell_g5}), it is shown that the bielliptic loci are rational for small values of the genus, \textit{i.e.} $g \in \{3,4,5\}$. The methods employed in these articles are \textit{ad-hoc} and dependent on particular properties of the bielliptic curves of said genera. In the paper \cite{art_biell_g6}, the authors prove, among other things, that for $g \geq 6$, the bielliptic loci are unirational. Their methods are based on an analysis of the bielliptic curves among the tetragonal ones. One thing they could not prove in that paper is the rationality of the bielliptic locus when $g \geq 6$. This problem (at least to the author's knowledge) is still open.

In a more recent paper (\cite{art_aprodu_bruno_sernesi_2019}), the bielliptic canonical curves are characterized in terms of their second syzygy scheme (roughly the intersection of the quadrics generating their ideal, but which are involved in linear relations). From their results we can see that any bielliptic canonical curve of genus $g \geq 6$ is the intersection of a uniquely determined projective cone over a genus $1$ curve (we will refer to these as elliptic normal cones) with the unique quadric among the ones generating the ideal of the curve which is not involved in a linear relation. As a consequence of this description, we propose the following strategy for producing a family of bielliptic canonical curves which would dominate the locus of bielliptic canonical curves in $\Hilb_{(2g-2)t + 1-g, \P^{g-1}}$: first to obtain a parameter space for elliptic normal cones in $\P^{g-1}$, then to produce a projective bundle $\mathcal{P}$ over this space so that the fiber of this bundle over a point corresponding to a cone $\mathscr{C} \subseteq \P^{g-1}$ is isomorphic to the space of quadrics not containing $\mathscr{C}$, \textit{i.e.} $\Ho^0(\O_{\P^{g-1}}(2))/\Ho^0(\O_{\mathscr{C}}(2))$. After that, we can produce a map from $\mathcal{P}$ to $\Hilb_{(2g-2)t + 1-g, \P^{g-1}}$ which is dominant over the locus of bielliptic canonical curves. Using the universal family associated to this Hilbert scheme, we can map further to the bielliptic locus in $M_g$. By showing that $\mathcal{P}$ is unirational, we obtain a proof of the result mentioned in the first paragraph.

The paper is divided in three sections. The first one contains general properties of bielliptic curves. The second one is dedicated to studying elliptic normal cones and their properties. Its main point is to show that an appropriate parameter space for them is a locus inside a particular Hilbert scheme of surfaces and that said locus is unirational. Most of the results in this section are rather computational and not difficult to prove, but we include them for the completeness of the exposition. The last part is dedicated to proving the main result.

\textbf{Acknowledgements.} The author would like to thank his advisor, Marian Aprodu, for suggesting the problem, helpful conversations and comments throughout the preparation of this paper.

\begin{small}
The author was supported by the PNRR grant CF 44/14.11.2022 \textit{Cohomological Hall algebras of smooth surfaces and applications}.
\end{small}

\section{Preliminaries}

Throughout this paper we will work over the field of complex numbers.

\begin{defn}
    A nonsingular projective curve $X$ of genus $g \geq 2$ is called bielliptic if it admits a $2:1$ cover to a nonsingular genus $1$ curve, \textit{i.e.} there exist a genus $1$ curve $E$ and a morphism $\varphi: X \ra E$ of degree $2$.
\end{defn}

\begin{rem}
    The Riemann-Hurwitz theorem applied for $\varphi$ tells us that $K_X \sim R$ since the canonical bundle of a genus $1$ curve is trivial (we denote by $R$ the ramification divisor on $X$), and $\deg(R) = 2g - 2$. $R$ has to be in this case a sum of $2g - 2$ distinct points on $X$ because the cover has degree $2$, same for the branch locus $B = \varphi_{\ast}R$. So they are both reduced effective divisors on their respective curves.
\end{rem}

\begin{rem}\label{rem_split}
    A bielliptic curve $X$ will always have a non-trivial automorphism, namely the involution which exchanges the sheets of the cover. The existence of this involution leads to the splitting of the following exact sequence:
    \begin{align*}
        0 \ra \O_E \ra \varphi_{\ast}\O_X \ra L^{-1} \ra 0
    \end{align*}
where $L^{-1}$ is the cokernel of the natural morphism $\O_E \ra \varphi_{\ast}\O_X$, so $\varphi_{\ast}\O_X \simeq \O_E \oplus L^{-1}$ and $L^{\otimes 2} \simeq \O_E(B)$.
\end{rem}

\begin{rem}
There exist bielliptic curves of any genus $g \geq 2$. This is due to general methods of constructing double covers (see for example \cite[Ch. 2, pg. 46]{book_friedman_1998}) and the fact that if $D$ is a divisor on a nonsingular projective curve and $k \in \mathbb{N}$ divides its degree, then we can write $D = kD'$, for $D'$ another divisor.
\end{rem}

\begin{rem}
If $X$ is a bielliptic curve of genus $g$, then $X$ is not hyperelliptic for $g \geq 4$, $X$ is not trigonal for $g \geq 5$, and $X$ has a unique bielliptic structure for $g \geq 6$. These are consequences of the Castelnuovo-Severi inequality, which gives an upper bound for the genus of a curve covering two other curves. For a reference, see \cite[Ch. VIII, Exc. C-1, C-2]{book_acgh1}. Moreover, if $X$ is not trigonal, it is automatically tetragonal (compose the bielliptic map with a $2:1$ morphism from the genus $1$ curve to $\mathbb{P}^1$).
\end{rem}

From now on we will work with bielliptic curves of genus at least $6$, so we need not worry about multiple bielliptc structures on the same curve. Also, due to this restriction, we will always have canonical embeddings, so let us recall some basic properties of their canonical models. Suppose we have $C \subseteq \P^{g-1}$ a bielliptic canonical curve, with $\varphi: C \ra E$ as before. The general fiber of $\varphi$ has $2$ distinct points and we can consider the line in $\P^{g-1}$ which joins them.

\textbf{Claim:} Any $2$ such lines will intersect (this argument can be seen in \cite[Proposition 9]{art_aprodu_bruno_sernesi_2019}). 

Let $D \in \Div(C)$ be the sum of the $4$ points in two distinct general fibers. We have $D = \varphi^{\ast}D'$ for a divisor of degree $2$ on $E$. $\varphi$ is finite, so:
\begin{align*}
    \h^0(\varphi_{\ast}\O_C(D)) = \h^0(E,\varphi_{\ast}\O_C(D)) = \h^0(\O_E(D')) + \h^0(L^{-1} \otimes \O_E(D'))
\end{align*}
(we use the notation from \ref{rem_split}). By Riemann-Roch, $\h^0(\O_E(D')) = 2$ and $\h^0(L^{-1} \otimes \O_E(D')) = 0$. Therefore, we conclude that $\dim|D| = 1$, and by the geometric Riemann-Roch theorem (see \cite[Ch. I, pg. 12]{book_acgh1}) we have that $\dim(\overline{\Phi_K(D)}) = 2$, which means that our $4$ initial points lie, along with the $2$ lines, in the same $2$-dimensional subspace in $\P^{g-1}$, so the lines intersect. Let us remark that the same argument works if at least one of the fibers we consider would be over a point of the branch locus; then we would work with points with multiplicity $2$ on $C$ and with tangent lines instead of secants. So the previous claim is true for all lines generated by the points of the fibers, and by some simple projective geometry arguments, all these lines actually intersect in the same point $V$, which is uniquely determined by the bielliptic structure.

The union of these lines is an irreducible surface, namely a projective cone over $E$ embedded as a degree $g-1$ curve in a hyperplane of $\P^{g-1}$, and the projection from the vertex $V$ onto said hyperplane induces the double cover when restricted to $C$. This embedding of $E$ is called an elliptic normal curve and the cone, which we will denote by $\mathscr{C}$, is called an elliptic normal cone.

Next, since $C$ is tetragonal, we have that $\cliff(C) = 2$, so by Petri's theorem the ideal of $C$ is generated by quadrics, and the relations between those quadrics are not generated by the linear ones (that would happen if $\cliff(C) \geq 3$). From \cite{art_aprodu_bruno_sernesi_2019} we know that:\\
i) $\kappa_{11}(\mathscr{C}) = \kappa_{11}(C) - 1$, so the space of quadrics containing the cone has codimension $1$ in the space of quadrics containing $C$;\\
ii) $\syz_2(C) = \mathscr{C}$, so the cone is the intersection of the quadrics containing $C$ which are involved in linear relations.\\
We gather what we discussed so far in the following proposition:

\begin{prop}
    Let $C \subseteq \P^{g-1}$ be a canonical bielliptic curve of genus at least $6$, with bielliptic structure given by $\varphi: C \ra E$. Then:\\ a) there exists a unique point $V \in \P^{g-1}$ which is the vertex of a unique cone $\mathscr{C}$ over an embedding of $E$ in some hyperplane - both of which are determined by $\varphi$;\\
    b) the projection of $C$ onto the embedding of $E$ from the vertex $V$ is (modulo isomorphisms) exactly the map $\varphi$;\\
    c) $C$ is the intersection of $\mathscr{C}$ with the unique quadric which is not involved in a linear relation.
\end{prop}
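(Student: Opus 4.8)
The plan is to assemble the observations already made in the discussion preceding the statement. For part (a), the \textbf{Claim} established above says that the lines $\ell_p \subseteq \P^{g-1}$ spanned by the points of the fibres $\varphi^{-1}(p)$ (with tangent lines at the branch points) are pairwise incident. A family of at least three pairwise-incident lines in projective space is either concurrent or coplanar, and our family cannot be coplanar since $\bigcup_p \ell_p \supseteq C$ and $C$ spans $\P^{g-1}$ with $g-1 \geq 5$; hence there is a point $V$ lying on every $\ell_p$. The union of the $\ell_p$ is then the cone $\mathscr{C}$ with vertex $V$ over the image $E'$ of $E$ under the projection $\pi_V$ from $V$, and $E'$ lies in a hyperplane; from $\deg C = 2g-2$ and the fact (already noted above) that $\pi_V|_C$ is the bielliptic double cover one reads off $\deg E' = g-1$, so $E'$ is an elliptic normal curve. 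Uniqueness of $V$, hence of $\mathscr{C}$, follows from the uniqueness of the bielliptic structure for $g \geq 6$: a second such cone would produce a second degree-$2$ morphism from $C$ onto a genus-$1$ curve.

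For part (b), by construction $\pi_V$ contracts each $\ell_p$ to the point $\varphi(p) \in E'$, so $\pi_V|_C \colon C \ra E'$ is $\varphi$ followed by an isomorphism $E \simeq E'$; there is nothing further to check.

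For part (c), since $C$ is tetragonal we have $\cliff(C) = 2$, and Petri's theorem gives that $\I_C$ is generated by its quadrics $\Ho^0(\I_C(2))$. Because $C \subseteq \mathscr{C}$ by part (a) and, by statement (i), $\dim \Ho^0(\I_{\mathscr{C}}(2)) = \dim \Ho^0(\I_C(2)) - 1$, we may pick a quadric $Q$ with $\Ho^0(\I_C(2)) = \Ho^0(\I_{\mathscr{C}}(2)) \oplus \langle Q \rangle$; by statement (ii), $\syz_2(C) = \mathscr{C}$, this $Q$ is, modulo $\Ho^0(\I_{\mathscr{C}}(2))$, precisely the quadric through $C$ not occurring in a linear syzygy. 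Since $Q \in \Ho^0(\I_C(2))$ and $C \subseteq \mathscr{C}$, we get $C \subseteq \mathscr{C} \cap Q$. Conversely, the ideal sheaf of $\mathscr{C} \cap Q$ is $\I_{\mathscr{C}} + \I_Q$, and since the quadrics $\Ho^0(\I_{\mathscr{C}}(2))$ together with $Q$ span $\Ho^0(\I_C(2))$, which generates $\I_C$ by Petri, we obtain $\I_C \subseteq \I_{\mathscr{C}} + \I_Q$, i.e. $\mathscr{C} \cap Q \subseteq C$. Hence $C = \mathscr{C} \cap Q$ scheme-theoretically. (As a consistency check, $\deg(\mathscr{C} \cap Q) = 2\deg \mathscr{C} = 2(g-1) = \deg C$.)

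Since the proposition essentially repackages what has already been established, there is no serious obstacle; the two points that deserve a word of care are, in (a), the passage from pairwise incidence of the family $\{\ell_p\}$ to a common vertex — which rests on the elementary dichotomy concurrent/coplanar together with the non-degeneracy of the canonical curve — and, in (c), upgrading the set-theoretic description to a scheme-theoretic one, which, as the argument above shows, is formal once (i), (ii) and Petri's generation theorem are granted, and in particular does not require knowing that $\I_{\mathscr{C}}$ is itself generated by quadrics.
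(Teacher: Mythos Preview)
Your proposal is correct and follows the same line as the paper: the proposition has no separate proof in the paper, being explicitly presented as a summary of the preceding discussion, and you reproduce exactly that discussion with a little extra care. In particular, you make explicit the ``simple projective geometry argument'' the paper invokes for the common vertex (the concurrent/coplanar dichotomy combined with non-degeneracy of $C$), and in part (c) you spell out the scheme-theoretic equality $C = \mathscr{C}\cap Q$ via Petri generation and the inclusion $\Ho^0(\I_C(2)) \subseteq \I_{\mathscr{C}} + (Q)$, which the paper leaves implicit in its citation of \cite{art_aprodu_bruno_sernesi_2019}.
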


The conclusion is the following: a better understanding of elliptic normal cones is a first step towards the study of bielliptic curves.

\section{Cones over elliptic normal curves}

This section is dedicated to the study of elliptic normal cones in $\P^n$. The main purpose is to exhibit a parameter space for all such cones lying in $\P^n$.

\subsection{Hilbert polynomial of elliptic cones}

We will need the following well-known result:

\begin{prop}\label{prop_proj_norm}
    A projective cone in $\P^n$ over a projectively normal variety contained in a hyperplane is projectively normal (therefore normal).
\end{prop}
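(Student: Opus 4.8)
The plan is to reduce projective normality of the cone to the hypothesis on the base and then to deduce normality from the general fact that a projectively normal variety is normal. Let $Y \subseteq \P^{n-1} \subseteq \P^n$ be the given projectively normal variety sitting inside a hyperplane, and let $X \subseteq \P^n$ be the cone over $Y$ with vertex a point $p \notin \P^{n-1}$. Projective normality of $X$ means that for every $m \geq 0$ the restriction map $\Ho^0(\P^n, \O_{\P^n}(m)) \ra \Ho^0(X, \O_X(m))$ is surjective (and $X$ is nondegenerate, which is clear since $Y$ is nondegenerate in its hyperplane and $p$ is not in that hyperplane), equivalently that the homogeneous coordinate ring $S_X$ of $X$ is integrally closed, equivalently that $S_X$ agrees with $\bigoplus_{m \geq 0}\Ho^0(X,\O_X(m))$. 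So the first step is to set up this cohomological reformulation and fix coordinates: write $\P^n = \Proj k[x_0,\dots,x_n]$ with $\P^{n-1} = \{x_n = 0\}$, so that the homogeneous ideal of $X$ is generated by the homogeneous ideal of $Y$ (extended trivially in the extra variable), and $S_X \cong S_Y[x_n]$ as graded rings.

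The key step is the computation of the sections $\Ho^0(X,\O_X(m))$ via the projection structure. Projection from the vertex $p$ exhibits $X \setminus \{p\}$ as a line bundle (an $\mathbb{A}^1$-bundle) over $Y$; more usefully, if $\pi: \widetilde{X} \ra X$ denotes the blow-up of the vertex, then $\widetilde{X}$ is a $\P^1$-bundle over $Y$, namely $\widetilde{X} = \P(\O_Y \oplus \O_Y(-1))$, with a section $E_0$ (the exceptional divisor) and the pullback of a hyperplane giving $\O_{\widetilde X}(E_0 + \text{(fiber class pulled back from }Y))$. Pushing forward $\O_{\widetilde X}(m\cdot\text{hyperplane})$ to $Y$ and taking cohomology, one finds
\begin{align*}
    \Ho^0(X,\O_X(m)) = \bigoplus_{j=0}^{m} \Ho^0(Y, \O_Y(j)).
\end{align*}
On the other hand the degree-$m$ part of $S_X = S_Y[x_n]$ is $\bigoplus_{j=0}^m (S_Y)_j \cdot x_n^{m-j}$. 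Comparing the two, surjectivity of $(S_X)_m \to \Ho^0(X,\O_X(m))$ is equivalent to surjectivity of $(S_Y)_j \to \Ho^0(Y,\O_Y(j))$ for every $j \le m$, which is exactly the projective normality of $Y$. This gives projective normality of $X$.

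Finally, to pass from projectively normal to normal: a projectively normal subvariety of $\P^n$ has integrally closed homogeneous coordinate ring, hence is normal at every point (normality is local, and the local rings of $X$ are localizations of $S_X$ or of its dehomogenizations, all of which inherit integral closedness); alternatively one can invoke Serre's criterion, noting that the cone over a normal projectively normal variety is $R_1$ (its singular locus is just the vertex, codimension $\ge 2$ once $\dim Y \ge 1$; the case $\dim Y = 0$ being trivial) and $S_2$ (which is precisely the content of the arithmetically Cohen--Macaulay-type statement packaged in projective normality plus the depth computation above). The main obstacle is the cohomology computation $\Ho^0(X,\O_X(m)) = \bigoplus_{j\le m}\Ho^0(Y,\O_Y(j))$: one must be careful that pushing forward from the resolution $\widetilde X$ does not lose or add sections, i.e. that $\pi_*\O_{\widetilde X}(\text{hyperplane}) = \O_X(1)$ with no higher direct images interfering in degree zero, which ultimately rests on the normality of $X$ itself near the vertex — so the argument is most cleanly run by first establishing $S_X$ is integrally closed by the ring-theoretic identity $S_X = S_Y[x_n]$ together with the fact that a polynomial ring over an integrally closed domain is integrally closed, and only then reading off both projective normality and normality as consequences.
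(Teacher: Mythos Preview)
Your proposal is correct, and in fact your final sentence \emph{is} the paper's proof: the homogeneous coordinate ring of the cone is $S_X \cong S_Y[x_n]$, projective normality of $Y$ says $S_Y$ is integrally closed, and the paper simply invokes \cite[Ch.~5, Exc.~9]{book_am_69} (a polynomial ring over an integrally closed domain is integrally closed) to conclude that $S_X$ is integrally closed, i.e.\ $X$ is projectively normal, hence normal.

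Everything before your last sentence is a detour. The cohomological computation of $\Ho^0(X,\O_X(m))$ via the blow-up $\widetilde X = \P(\O_Y \oplus \O_Y(-1))$ is correct in spirit, but as you yourself note, identifying $\Ho^0(\widetilde X, \pi^*\O_X(m))$ with $\Ho^0(X,\O_X(m))$ uses $\pi_*\O_{\widetilde X} = \O_X$, which needs normality of $X$ --- the very thing at stake. So that route is circular unless you supply an independent argument. The ring-theoretic argument avoids this entirely: once $S_X$ is integrally closed you get both projective normality (by definition) and normality (local rings are localizations of an integrally closed domain) for free, with no cohomology and no resolution needed. You should lead with that and drop the rest.
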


The proof follows immediately from the description of homogeneous coordinate rings and \cite[Ch. 5, Exc. 9]{book_am_69}. In particular cones over elliptic normal curves (which are projectively normal, see \cite[Proposition IV.1.2]{hulek_ell}) are projectively normal.
\begin{prop}
Let $\mathscr{C} \subseteq \P^n$ be an elliptic normal cone over $E$. Then its Hilbert polynomial is $Q_n = \frac{nt^2}{2} + \frac{nt}{2}$.
\end{prop}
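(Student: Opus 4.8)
The plan is to compute the Hilbert polynomial of $\mathscr{C}$ by exploiting its cone structure together with the projective normality established in the previous proposition. Since $\mathscr{C} \subseteq \P^n$ is a projective cone over an elliptic normal curve $E$ embedded in a hyperplane $\P^{n-1}$ as a curve of degree $n$, I expect the homogeneous coordinate ring of $\mathscr{C}$ to be a polynomial ring in one extra variable over the homogeneous coordinate ring of $E$. More precisely, if $S_E = \bigoplus_m \Ho^0(\O_E(m))$ is the (projectively normal, hence saturated) homogeneous coordinate ring of $E \subseteq \P^{n-1}$, then the coordinate ring of the cone is $S_{\mathscr{C}} = S_E[x]$ where $x$ is the coordinate dual to the vertex, and this ring is again projectively normal by Proposition \ref{prop_proj_norm}. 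Hence $\Ho^0(\O_{\mathscr{C}}(m)) = (S_{\mathscr{C}})_m = \bigoplus_{j=0}^{m} (S_E)_j = \bigoplus_{j=0}^{m} \Ho^0(\O_E(j))$.

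Next I would plug in the values $\h^0(\O_E(j))$. For the elliptic normal curve of degree $n$ in $\P^{n-1}$, the line bundle $\O_E(1)$ has degree $n \geq 3$, so by Riemann-Roch on the genus $1$ curve $E$ we get $\h^0(\O_E(j)) = jn$ for $j \geq 1$, while $\h^0(\O_E(0)) = 1$. Summing, $\h^0(\O_{\mathscr{C}}(m)) = 1 + \sum_{j=1}^{m} jn = 1 + n\frac{m(m+1)}{2}$. Since $\mathscr{C}$ is projectively normal, and in particular $\Ho^1(\I_{\mathscr{C}}(m)) = 0$ for all $m$, the Hilbert function equals the Hilbert polynomial for all $m \geq 1$ (one can also invoke Castelnuovo-Mumford regularity or simply that a surface has Hilbert polynomial of degree $2$ and the above expression is already polynomial). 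Therefore the Hilbert polynomial is $Q_n(t) = \frac{nt^2}{2} + \frac{nt}{2} + 1$.

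I should note a small discrepancy: the stated formula in the proposition omits the constant term $+1$. If the intended claim really is $Q_n = \frac{nt^2}{2} + \frac{nt}{2}$, then either the convention is to record only the leading part governed by the degree and sectional genus (degree $n$, arithmetic genus of a hyperplane section being the genus $0$ of the rational normal-type section... ), or there is a typo. In any case the honest computation gives the constant term $1$, reflecting that $\mathscr{C}$ is connected and reduced with $\h^0(\O_{\mathscr{C}}) = 1$; I would present the argument as above and simply carry whichever normalization the rest of the paper uses.

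The main obstacle, such as it is, is purely bookkeeping: one must be careful that the graded piece $(S_{\mathscr{C}})_m$ genuinely computes $\h^0(\O_{\mathscr{C}}(m))$ with no correction terms, which is exactly what projective normality of the cone (Proposition \ref{prop_proj_norm}, applied to the projectively normal elliptic normal curve) buys us. Once that identification is in hand, the rest is the Riemann-Roch computation on $E$ and a finite summation, with no geometric subtlety remaining.
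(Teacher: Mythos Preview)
Your argument is correct, and in fact more direct than the paper's. Where you sum the graded pieces of $S_{\mathscr{C}}=S_E[x]$ outright, the paper sets up the recurrence $H(S/I,t)-H(S/I,t-1)=nt$ coming from the hyperplane section and then identifies the constant of integration with $\chi(\O_{\mathscr{C}})$, which it computes via the Leray spectral sequence for the blow-up $f:X\to\mathscr{C}$. Both routes compute the same Hilbert function; yours avoids any cohomological machinery beyond Riemann--Roch on $E$.

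On the discrepancy you flagged: your constant term $+1$ is correct. The paper's spectral-sequence step asserts $\Ho^{i}(\mathscr{C},\O_{\mathscr{C}})\simeq\Ho^{i}(X,\O_X)$, which would need $R^{1}f_{\ast}\O_X=0$. But the vertex of an elliptic cone is \emph{not} a rational singularity: the exceptional curve is $E$ with $\h^{1}(\O_E)=1$, and the theorem on formal functions gives $R^{1}f_{\ast}\O_X\cong\mathbb{C}_V$. Feeding this back into the five-term exact sequence (with $\h^{1}(X,\O_X)=1$, $\h^{2}(X,\O_X)=0$) yields $\h^{1}(\O_{\mathscr{C}})=0$, $\h^{2}(\O_{\mathscr{C}})=0$, hence $\chi(\O_{\mathscr{C}})=1$, in agreement with your computation. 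Since the Hilbert function $1+\tfrac{n t(t+1)}{2}$ is already a polynomial in $t$, it must coincide with the Hilbert polynomial, and the constant term is $1$.

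This does not affect anything downstream in the paper: one simply works in $\Hilb_{Q_n+1,\P^n}$ rather than $\Hilb_{Q_n,\P^n}$, and all subsequent arguments go through verbatim. So you should present your computation as written and carry the constant $+1$; the remainder of the paper's logic is insensitive to it.
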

\begin{proof}
Without loss of generality we can assume the vertex of $\mathscr{C}$ is $[0:\ldots:0:1]$. If $S = k[X_0, \ldots, X_n]$ and $I$ is the ideal of $\mathscr{C}$, then $(S/I)/X_n(S/I)$ is the homogeneous coordinate ring of $E$. Consider now the following exact sequence of graded modules:
\begin{align*}
    0 \ra (S/I)(-1) \ra S/I \ra (S/I)/X_n(S/I) \ra 0
\end{align*}
From this we obtain the following relation between Hilbert functions:
\begin{align*}
    H(S/I,t) - H(S/I,t-1) = H((S/I)/X_n(S/I),t)
\end{align*}
$E$ is an eliptic normal curve, so $H((S/I)/X_n(S/I),t) = nt$. Therefore:
\begin{align*}
    H(S/I,t) - H(S/I,t-1) = nt
\end{align*}
and from this recurrence relation we obtain:
\begin{align*}
\begin{split}
    H(S/I,t) &= \frac{nt^2}{2} + \frac{nt}{2} + H(S/I,0)\\
             &= \frac{nt^2}{2} + \frac{nt}{2} + \chi(\O_{\mathscr{C}})
\end{split}
\end{align*}
We are left with computing $\chi(\O_{\mathscr{C}})$. Let $f: X \ra \mathscr{C}$ be the blowup of the cone's vertex. Then $X$ is a $\P^1$-bundle over $E$, so $X = \mathbb{P}(\mathcal{E})$, where $\mathcal{E}$ is a rank $2$ bundle over $E$. In particular, it is a ruled surface over $E$. We use the Leray spectral sequence:
    \begin{align*}
        E_2^{pq} = \Ho^q (\mathscr{C}, R^p f_{\ast} \O_X ) \Rightarrow \Ho^{p+q}(X,\O_X)
    \end{align*}
    We know that $R^p f_{\ast} \O_X = 0$ for all $p \geq 2$ since the dimension of the fibers of $f$ is at most $1$ (\cite[III.11.2]{book_hartshorne_1977}). All the differentials on page $2$ are $0$, so $E_{\infty}^{pq} = E_2^{pq}$ for all $p, q$, therefore $\Ho^i(\mathscr{C},f_{\ast}\O_X) \simeq \Ho^i(X,\O_X)$ for all $i$. Since $\mathscr{C}$ is normal, we have $f_{\ast}\O_X = \O_{\mathscr{C}}$, and by the previous isomorphisms:
    \begin{align*}
        \h^0(\mathscr{C},\O_{\mathscr{C}}) = 1\\
        \h^1(\mathscr{C},\O_{\mathscr{C}}) = 1\\
        \h^2(\mathscr{C},\O_{\mathscr{C}}) = 0
    \end{align*}
\end{proof}

We can thus conclude that a suitable parameter space for such cones is a locus inside the Hilbert scheme $\Hilb_{Q_n,\P^n}$ of subschemes in $\P^n$ having Hilbert polynomial $Q_n$. For a succinct introduction to the Hilbert scheme and its basic properties, see the first chapter of \cite{book_hm_98}. The points of this scheme parametrize (among other subschemes) irreducible, non-degenerate surfaces of degree $n$ in $\P^n$. We know from \cite[11. Theorem 8]{nagata_rat_1} that any such surface has to be one of the following (provided $n \geq 10$): a projection of an irreducible, non-degenerate surface of degree $n + 1$ in $\P^{n+1}$ with center outside the surface or an elliptic normal cone. We will next show that the projections do not have $Q_n$ as their Hilbert polynomial.

Suppose $X \in \P^n$ is an irreducible, non-degenerate surface of degree $n$ obtained by projecting an irreducible, non-degenerate surface $S$ of degree $n$ in $\P^{n+1}$ from a point $O \not\in S$. By the classification of surfaces of minimal degree (see \cite[page 525]{book_gh_94}), $S$ is a rational normal scroll, either an embedding of a Hirzebruch surface, or a cone over a rational normal curve. Either way, its general hyperplane section is a rational normal curve and we can consider that the hyperplane does not contain the center $O$. This way, no line joining $O$ with a point on this rational normal curve can be a tangent or a bisecant. We only care to eliminate these because rational normal scrolls have the following property: if a line intersects $S$ in at least $3$ points, then it is contained in $S$, so a line through $O$ which intersects $S$ can cut it in at most one other point. The projection of this hyperplane section of $S$ will be a hyperplane section of $X$, nonsingular and rational (although not a rational normal curve because of the degree). Then, by a computation as in the case of elliptic normal cones, the Hilbert polynomial of $X$ is:
\begin{align*}
    P_X(t) = \frac{n}{2}t^2 + \frac{n+2}{2}t + P_X(0)
\end{align*}
It is clear that these projections of scrolls will not correspond to points of $\Hilb_{Q_n,\P^n}$.

In conclusion, out of all the irreducible, non-degenerate surfaces in $\P^n$, where $n \geq 10$ (this restriction will be imposed throughout the rest of the paper), the only ones which are parametrized by points in $\Hilb_{Q_n, \P^n}$ are the elliptic normal cones. All the other points will correspond to closed subschemes which are not as well-behaved (reducible, maybe with embedded points, with components of different dimensions, etc). But we can resort to working with the open subscheme of irreducible, non-degenerate surfaces in $\Hilb_{Q_n, \P^n}$, and this will be our parameter space for elliptic normal cones in $\P^n$. We will from now on denote this space by $H_c$, taken with its reduced structure just as a safety measure.

\subsection{Elliptic cones and ruled surfaces}

Let $\mathscr{C} \subseteq \P^n$ be an elliptic normal cone over the genus $1$ curve $E$. As mentioned before, by blowing up the vertex of $\mathscr{C}$ we obtain a ruled surface $X$ over $E$. Moreover, we have $X \simeq \P(\O_E \oplus \O_E(-D))$ (see \cite[V.2.11.4.]{book_hartshorne_1977}), with $\O_E(D) = \O_E(1)$, so $\deg(D) = n $. The rank $2$ bundle inducing the ruled surface is normalized, so the invariant of the surface is $e = - \deg(\O_E(-D)) = n$. Denoting the unique minimal section by $C_0$, we have that $C_0^2 = -n$ and it is the inverse image of the vertex of the cone. We also have $\Pic(X) \simeq \mathbb{Z}C_0 \oplus \pi^{\ast}\Pic(E)$ (see \cite[V.2.3.]{book_hartshorne_1977}).  

\begin{prop}\label{prop_ruled_lin_sys}
Let $\pi: X_D = \P(\O_E \oplus \O_E(-D)) \ra E$ be a ruled surface over the genus $1$ curve $E$, where $D$ is a degree $n \geq 3$ divisor on $E$. Then there exists a unique divisor $H_D$ on $X_D$ such that:\\
i) the general member of the complete linear system $|H_D|$ is an irreducible, nonsingular curve of genus $1$;\\
ii) $|H_D|$ is base-point-free;\\
iii) the morphism $\varphi_{|H_D|}$ induced by $|H_D|$ corresponds to blowing up the vertex of an elliptic normal cone in $\P^n$.
\end{prop}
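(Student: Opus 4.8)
The plan is to take $H_D := C_0 + \pi^{\ast}D$ (as a divisor class, using $\Pic(X_D) \simeq \mathbb{Z}C_0 \oplus \pi^{\ast}\Pic(E)$), to check that it satisfies (i)--(iii), and then to see that no other class can. From $C_0^2 = -n$, $C_0 \cdot \pi^{\ast}D = n$, $(\pi^{\ast}D)^2 = 0$ and $C_0 \cdot f = 1$ for a fibre $f$ one reads off $H_D^2 = n$, $H_D \cdot C_0 = 0$ and $H_D \cdot f = 1$. Since $\O_E \oplus \O_E(-D)$ is the normalized bundle, $\pi_{\ast}\O_{X_D}(C_0) = \O_E \oplus \O_E(-D)$ (cf.\ \cite[V.2]{book_hartshorne_1977}), so by the projection formula $\pi_{\ast}\O_{X_D}(H_D) = \O_E(D) \oplus \O_E$; as $H_D$ has degree $1$ on every fibre the higher direct images vanish, whence $\h^0(X_D,\O_{X_D}(H_D)) = \h^0(\O_E(D)) + \h^0(\O_E) = n+1$. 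For (ii): $|\pi^{\ast}D|$ is base-point-free because $|D|$ is on $E$ ($\deg D = n \geq 3$), so every member of the subsystem $C_0 + |\pi^{\ast}D| \subseteq |H_D|$ has base locus contained in $C_0$; on the other hand the projection $\O_E \oplus \O_E(-D) \ra \O_E$ onto the first summand defines a section $C_1 \sim C_0 + \pi^{\ast}D = H_D$ (so $C_1 \in |H_D|$) with $C_1 \cdot C_0 = 0$ and $C_1 \not\sim C_0$, hence $C_0 \cap C_1 = \emptyset$; therefore $|H_D|$ has no base point, which is (ii).

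For (iii) I would invoke the standard geometry of cones. The complete system $|D|$ embeds $E$ as an elliptic normal curve in $\P^{n-1}$; let $\mathscr{C} \subseteq \P^n$ be the cone over it, with vertex $V$. By \cite[V.2.11.4]{book_hartshorne_1977}, blowing up $V$ produces exactly $X_D = \P(\O_E \oplus \O_E(-D))$, and the resolution morphism $f\colon X_D \ra \mathscr{C} \subseteq \P^n$ is given by a linear subsystem of $|f^{\ast}\O_{\P^n}(1)|$. Because $\mathscr{C}$ is projectively normal (Proposition \ref{prop_proj_norm}) and non-degenerate, the restriction $\Ho^0(\P^n,\O(1)) \ra \Ho^0(\mathscr{C},\O_{\mathscr{C}}(1))$ is an isomorphism, and $f_{\ast}\O_{X_D} = \O_{\mathscr{C}}$ by normality of $\mathscr{C}$, so the projection formula identifies $\Ho^0(\mathscr{C},\O_{\mathscr{C}}(1))$ with $\Ho^0(X_D,\O_{X_D}(f^{\ast}\O_{\P^n}(1)))$; hence $f = \varphi_{|f^{\ast}\O_{\P^n}(1)|}$ is given by a complete linear system. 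Writing $f^{\ast}\O_{\P^n}(1) \sim aC_0 + \pi^{\ast}\mathfrak{d}$, the facts that $f$ contracts $C_0$ onto $V$, restricts to an isomorphism of each fibre onto a line of $\mathscr{C}$, and has $\h^0 = n+1$ force successively $a = 1$, $\deg\mathfrak{d} = n$, and then $\mathfrak{d} \sim D$ (because $\h^0(C_0 + \pi^{\ast}\mathfrak{d}) = n + \h^0(\mathfrak{d}-D)$ while $\deg(\mathfrak{d}-D) = 0$ on the elliptic curve $E$). So $\varphi_{|H_D|} = f$ is the blowup of $V$, giving (iii). Finally (i) follows: by Bertini and (ii) the general member of $|H_D|$ is smooth, and, $|H_D|$ being complete, it may be taken of the form $f^{-1}(\Lambda)$ for a general hyperplane $\Lambda$, which misses $V$; then $f$ restricts to an isomorphism $f^{-1}(\Lambda) \simeq \Lambda \cap \mathscr{C}$, and $\Lambda \cap \mathscr{C}$ is the elliptic normal curve, an irreducible nonsingular curve of genus $1$.

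For uniqueness, suppose $H'$ also satisfies (i)--(iii); then $\varphi_{|H'|}$ is a birational morphism onto an elliptic normal cone in $\P^n$. Its exceptional curve has zero intersection with $H'$ and negative self-intersection, and on $X_D$ (ruled over a positive-genus curve with $e = n > 0$) the only irreducible curve of negative self-intersection is $C_0$, by \cite[V.2.21]{book_hartshorne_1977}; so $C_0$ is the contracted curve. Repeating the analysis above verbatim---writing $H' \sim aC_0 + \pi^{\ast}\mathfrak{d}$, using that the fibres map isomorphically onto lines ($a = H' \cdot f = 1$), that $C_0$ is contracted ($\deg\mathfrak{d} = n$), and that the image spans $\P^n$ ($\h^0(H') = n+1$, hence $\mathfrak{d} \sim D$)---yields $H' \sim C_0 + \pi^{\ast}D = H_D$.

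I expect the only genuinely delicate point to be the identification in (iii): showing that $\varphi_{|H_D|}$ is \emph{precisely} the blowup of the vertex of $\mathscr{C}$, and not merely some birational model contracting $C_0$. This is exactly where projective normality of $\mathscr{C}$ is needed, to guarantee that the resolution map is given by the \emph{complete} hyperplane system on $X_D$; once that is in place, everything else is routine intersection theory on the decomposable ruled surface $X_D$.
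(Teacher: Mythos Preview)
Your proof is correct, but the logical route differs from the paper's. The paper first extracts necessary numerical conditions ($H\cdot C_0=0$, $H\cdot f=1$, $\h^0(H)=n+1$), uses the same pushforward computation as you to pin down $H=C_0+Df$, and then proves (i) \emph{directly}: it shows by a component-by-component analysis (using \cite[V.2.20]{book_hartshorne_1977} and a small $\h^2$ vanishing) that a general member is irreducible, and rules out a genus drop by observing that a rational normalization would give a nonconstant map $\P^1\to E$. After that it proves (ii) by an intersection argument and simply remarks that (iii) follows from (i)+(ii). You instead prove (ii) first via the explicit second section $C_1$, then establish (iii) by starting from the cone and using projective normality (Proposition~\ref{prop_proj_norm}) to show that the resolution $f$ is given by the \emph{complete} system $|f^\ast\O(1)|$, which you then identify with $|H_D|$; finally you read off (i) from (iii) as a hyperplane section. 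Your approach is cleaner for (i) and makes (iii) explicit rather than leaving it as ``easy to see'', at the cost of invoking Proposition~\ref{prop_proj_norm}; the paper's approach to (i) is more self-contained but heavier. Two small points: the statement that fibres of $\pi$ map to lines of the cone (used in (iii) and in uniqueness) tacitly uses the uniqueness of the ruling on a ruled surface over a genus~$\geq 1$ curve---worth a one-line remark; and the reference for ``only $C_0$ has negative self-intersection'' should be \cite[V.2.20]{book_hartshorne_1977} (the $e\geq 0$ case), not V.2.21.
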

\begin{proof}
For this proof we will denote the surface and the divisor simply by $X$ and $H$, respectively. Let us first remark that any such divisor must satisfy the following conditions: $H.C_0 = 0$, $H.f = 1$, $H^2 = n$ and $\h^0(\O_X(H)) = n + 1$. We will show that they uniquely determine $H$.

We have $H = aC_0 + \beta f$, $a \in \mathbb{Z}$, $\delta = \deg_E(\beta)$. If $H.f = 1$ and $H.C_0 = 0$, then $a = 1$ and $\delta = n$. Therefore $H = C_0 + \beta f$. Next, we'll show that $\beta = D$. We have that:
\begin{align*}
    \begin{split}
        \pi_{\ast}\O_X(C_0 + \beta f) &= \pi_{\ast} (\O_X(C_0) \otimes \pi^{\ast}\O_E(\beta))\\
        &= \pi_{\ast}(\O_X(C_0)) \otimes \O_E(\beta)\\
        &= (\O_E \oplus \O_E(-D)) \otimes \O_E(\beta)\\
        &= \O_E(\beta) \oplus \O_E(\beta - D)
    \end{split}
\end{align*}
Since $H.f \geq 0$, $\h^i(\O_X(H)) = \h^i(\pi_{\ast}\O_X(H))$ for all $i$ (\cite[V.2.4]{book_hartshorne_1977}), so $\h^0(\O_X(H)) = \h^0(\O_E(\beta)) + \h^0(\O_E(\beta - D))$. $\beta$ and $D$ are both divisors of degree $n$ on $E$, and since $\h^0(\O_X(H)) = n + 1$, then $\h^0(\O_E(\beta - D)) = 1$. But $\beta - D$ is of degree $0$, so $\beta = D$. Therefore the only possibility is $H = C_0 + Df$. Next we will prove that $H$ satisfies the three properties. It is easy to see that $iii)$ is a consequence of the first two, so we only need to show that it satisfies $i)$ and $ii)$.

$i)$ Since $\h^0(\O_X(H)) = n + 1$, but $\h^0(\O_E(D)) = n$, the part of $|H|$ consisting of divisors of type $C_0 + D'f$ with $D'$ linearly equivalent to $D$ has codimension $1$, thus the general member cannot be of this form. From now on, when we refer to a general member of this linear system it means that it is not of the form $C_0 + D'f$, with $D' \sim D$ in $\Pic(E)$.

Notice that if $H' \in |H|$ is a general member, then $C_0 \not\subseteq \Supp(H')$. Otherwise $H' - C_0 \sim Df$, so $H'$ would be of the form $C_0 + D'f$, with $D' \sim D$ in $\Div(E)$. 

Let $C$ be an irreducible curve in the support of $H'$. Then $C = aC_0 + \beta f$, with $\deg(\beta) = b$. As mentioned before, $C \neq C_0$. Suppose that $C \neq f$ (clearly $H'$ cannot have only fibers in its support). Then $a>0$ and $b \geq an$ (see \cite[V.2.20]{book_hartshorne_1977}). We also have that $|H' - C| \neq \varnothing$. By Serre duality:
    \begin{align*}
        \begin{split}
            \h^0(\O_X(H' - C)) &= \h^2(\O_X(K_X - H' + C))\\
                               &= \h^2(\O_X((a-3)C_0 + (\beta - 2D)f))
        \end{split}
    \end{align*}
    By \cite[V.2.4]{book_hartshorne_1977}, the above quantity is $0$ if $((a-3)C_0 + (\beta - 2D)f.f) = a - 3 \geq 0$, so we must have $a \in \{1,2\}$. But since $H' \equiv C_0 + nf$, we cannot have curves $C$ in the support of $H'$ with $a = 2$. So $a = 1$ and $b \geq n$. $|H' - C| = |(D-\beta)f| \neq \varnothing$ and $((D-\beta)f.f) = 0$, so by \cite[V.2.1,V.2.4]{book_hartshorne_1977}, $\pi_{\ast}(\O_X((D-\beta)f))$ is a line bundle of degree $n - b \leq 0$ on $E$ and has the same cohomology as $\O_X((D-\beta)f)$, in particular it has at least a non-zero global section. As a consequence, its degree must be $0$, so it is $\O_E$, \textit{i.e.} $D \sim \beta$, and $H' = C$, \textit{i.e.} $H'$ is an irreducible curve.

Let $C$ be a general member of $|H|$. By the adjunction formula:
\begin{align*}
	2p_a(C) - 2 = (C.C + K_X) = 0
\end{align*}
so $p_a(C) = 1$. Let $\Tilde{C}$ be the normalization of $C$. It is an integral, nonsingular curve. Then:
    \begin{align*}
        1 = p_a(C) = p_a(\Tilde{C}) + \sum\limits_{P \in \Sing(C)} \delta_P
    \end{align*}
    so either $p_a(\Tilde{C}) = 0$ and $C$ has only one singular point, or $p_a(\Tilde{C}) = 1$ and $C$ is nonsingular. In the first case $\Tilde{C} \simeq \P^1$, so by composing the normalization map with $\pi$ we get a finite nonconstant morphism from $\P^1$ to $E$, which is a contradiction. So we can only have $C$ nonsingular of genus $1$.

$ii)$ Since $H^2 = n$, any $2$ general members of the linear system intersect in at most $n$ distinct points, so there can be only finitely many base points. $H.C_0 = 0$ implies that the base points can not lie on $C_0$, so in $\Supp(H)$ the base points would have to be on the fibers in $Df$. Also, $H.f = 1$ implies that there can be at most one base point per fiber. In this case, if $|H|$ were not base-point-free, $|Df|$ would not be either, since through a point passes an unique fiber, so $\dim|Df| = 0$, which would be a contradiction.
\end{proof}

Thus we can see that any elliptic normal cone in $\P^n$ can be obtained as the image of a morphism from a ruled surface $\P(\O_E \oplus\O_E(-D))$ over a genus $1$ curve $E$ given by the linear system $|H_D|$, where $D$ is a degree $n$ divisor on $E$ and $H_D = C_0 + Df$. We can further refine this description due to the following lemma.

\begin{lemma}
    Let $E$ be a genus $1$ curve and $L$ a degree $n$ line bundle on $E$. Then there exists $P \in E$ such that $L \simeq \O_E(nP)$.
\end{lemma}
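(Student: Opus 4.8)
The statement is: for a genus $1$ curve $E$ and a degree $n$ line bundle $L$ on $E$, there is a point $P \in E$ with $L \simeq \O_E(nP)$. The plan is to reduce everything to the group law on $E$. Fix an origin $O \in E$, making $E$ into an elliptic curve, and recall the classical fact that the map $E \to \Pic^0(E)$, $Q \mapsto \O_E(Q - O)$, is a group isomorphism (this is in \cite[Ch. II]{book_acgh1} or in any treatment of elliptic curves). Under this identification, $\Pic^n(E) \simeq \Pic^0(E) \simeq E$ as well, by tensoring with $\O_E(nO)$.

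The key step is then purely a matter of divisibility in the group $E$. Given $L$ of degree $n$, the class $L \otimes \O_E(-nO)$ lies in $\Pic^0(E)$ and corresponds to some point $Q_0 \in E$ under the isomorphism above; we want to find $P \in E$ with $\O_E(nP - nO) \simeq L \otimes \O_E(-nO)$, i.e. with $n \cdot (P - O) = Q_0 - O$ in the group, i.e. with $n \cdot P' = Q_0'$ where $P', Q_0'$ are the corresponding elements of $E$ viewed as an abelian group. So I need the multiplication-by-$n$ map $[n]\colon E \to E$ to be surjective. This holds because $[n]$ is a nonconstant morphism of smooth projective curves (nonconstant since $E(\mathbb{C})$ is a complex torus and $[n]$ is multiplication by $n \neq 0$ on it, which is an isogeny of degree $n^2$), hence surjective; alternatively one invokes directly that $E(\mathbb{C}) \cong \mathbb{C}/\Lambda$ is a divisible group, so $[n]$ is onto. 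Pulling back a preimage $P$ of $Q_0$ under $[n]$, we get $nP \sim Q_0 + (n-1)O$ in $\Div(E)$ after translating back, hence $\O_E(nP) \simeq L$.

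Concretely, without passing through the torus picture: write $L = \O_E(D)$ for an effective (or any) divisor $D$ of degree $n$; by Riemann–Roch on $E$, $D \sim D''$ for $D''$ an effective divisor, and then using the group law one can successively replace pairs of points $\{A, B\}$ in the support by $\{A + B, O\}$ (addition in the group), collapsing $D$ to a linearly equivalent divisor of the form $nO + (R - O) = (n-1)O + R$ for a single point $R$; finally solve $nP \sim (n-1)O + R$, equivalently $[n]P = R$ in the group, using surjectivity of $[n]$.

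The only genuine content — and the step I'd flag as the crux — is the surjectivity of multiplication-by-$n$ on $E$; everything else is bookkeeping with the group law and Riemann–Roch. Over $\mathbb{C}$ this is immediate from the complex-torus description, so in our setting the lemma is essentially a one-line consequence once the group law on $\Pic^0(E) \simeq E$ is in hand; I would present it in that brief form.
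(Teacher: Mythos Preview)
Your proof is correct and is essentially the same as the paper's: both reduce the claim to the surjectivity of a nonconstant morphism between smooth projective curves. The paper phrases this without choosing an origin, simply observing that $\Pic_1(E) \to \Pic_n(E)$, $\O_E(P) \mapsto \O_E(nP)$, is a nonconstant map of curves and hence surjective; your version unwinds this through the identification $E \simeq \Pic^0(E)$ and multiplication by $n$, which is the same argument in different coordinates.
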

\begin{proof}
    We have a map $\Pic_1(E) \ra \Pic_n(E)$ sending $\O_E(P)$ to $\O_E(nP)$. This is a nonconstant map between two curves, therefore it is surjective.
\end{proof}

So in our previous description, it is sufficient to work with divisors $nP$, where $P \in E$, instead of all degree $n$ divisors. One more thing to mention is that a linear system of type $|H_{nP}|$ gives a morphism from $\P(\O_E \oplus\O_E(-nP))$ to $\P(\Ho^0(\O(H_{nP}))^{\vee})$. Therefore, to obtain a morphism from the ruled surface to $\P^n$ we just need to compose the previously mentioned one with an isomorphism $\P(\Ho^0(\O(H_{nP}))^{\vee}) \ra \P^n$.

\begin{prop}
Let $E$ be a genus $1$ curve. Then any two elliptic normal cones in $\P^n$ obtained by the process described above are projectively equivalent.
\end{prop}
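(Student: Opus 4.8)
The plan is to exploit the transitivity of the translations of $E$ on the points of $E$. First, note that replacing the isomorphism $\P(\Ho^0(\O(H_{nP}))^{\vee}) \to \P^n$ used in the construction by another one changes the resulting cone only by an element of $\Pgl_{n+1}$; hence it suffices to show that for any two points $P, P' \in E$ the images of $\varphi_{|H_{nP}|}$ and $\varphi_{|H_{nP'}|}$, taken inside $\P(\Ho^0(\O(H_{nP}))^{\vee})$ and $\P(\Ho^0(\O(H_{nP'}))^{\vee})$ respectively, correspond under some linear isomorphism of these two projective spaces.

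Fix an origin on $E$ and let $\tau\colon E \to E$ be the translation with $\tau(P) = P'$; it is an automorphism and $\tau^{\ast}\O_E(nP') \simeq \O_E(nP)$. Thus $\tau^{\ast}(\O_E \oplus \O_E(-nP')) \simeq \O_E \oplus \O_E(-nP)$, so by functoriality of the projectivization (\cite[II.7]{book_hartshorne_1977}) the natural map
\begin{align*}
\widetilde\tau\colon X_{nP} = \P\big(\tau^{\ast}(\O_E \oplus \O_E(-nP'))\big) \longrightarrow \P\big(\O_E \oplus \O_E(-nP')\big) = X_{nP'}
\end{align*}
is an isomorphism lying over $\tau$. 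Since both surfaces have invariant $e = n > 0$, each contains a unique irreducible curve of negative self-intersection, namely its minimal section; as $\widetilde\tau$ preserves intersection numbers it carries $C_0$ onto $C_0'$. Using moreover $\widetilde\tau^{\ast}(\pi')^{\ast}\O_E(nP') = \pi^{\ast}\tau^{\ast}\O_E(nP') = \pi^{\ast}\O_E(nP)$ we get
\begin{align*}
\widetilde\tau^{\ast}\O_{X_{nP'}}(H_{nP'}) \simeq \widetilde\tau^{\ast}\O(C_0') \otimes \widetilde\tau^{\ast}(\pi')^{\ast}\O_E(nP') \simeq \O(C_0) \otimes \pi^{\ast}\O_E(nP) = \O_{X_{nP}}(H_{nP}).
\end{align*}
Hence $\widetilde\tau^{\ast}$ induces an isomorphism $\Ho^0(X_{nP'},\O(H_{nP'})) \to \Ho^0(X_{nP},\O(H_{nP}))$, and therefore a linear isomorphism $\psi$ of the associated projective spaces with $\varphi_{|H_{nP'}|}\circ\widetilde\tau = \psi\circ\varphi_{|H_{nP}|}$. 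Passing to images — which by Proposition \ref{prop_ruled_lin_sys}(iii) are precisely the two elliptic normal cones — shows the cones correspond under $\psi$, and composing with the chosen identifications with $\P^n$ finishes the argument.

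The only step that genuinely needs care is the identification $\widetilde\tau(C_0) = C_0'$; the rest is formal. One could instead track the distinguished quotients $\O_E \oplus \O_E(-nP) \to \O_E$ and $\O_E \oplus \O_E(-nP') \to \O_E$, which $\widetilde\tau$ respects by construction, and so match the minimal sections directly — but invoking uniqueness of the negative curve is shorter.
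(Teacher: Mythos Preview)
Your proof is correct and follows essentially the same idea as the paper: use a translation $\tau$ on $E$ sending $P$ to $P'$ to produce a linear isomorphism between $\P(\Ho^0(\O(H_{nP}))^{\vee})$ and $\P(\Ho^0(\O(H_{nP'}))^{\vee})$ matching the two cones. The only difference is in packaging: the paper invokes the pushforward decomposition $\Ho^0(\O_{X_{nP}}(H_{nP})) \simeq \Ho^0(\O_E) \oplus \Ho^0(\O_E(nP))$ and lets $\tau$ act on the second summand, while you build the isomorphism $\widetilde\tau\colon X_{nP}\to X_{nP'}$ of ruled surfaces explicitly and verify $\widetilde\tau^{\ast}\O(H_{nP'})\simeq\O(H_{nP})$; your route has the minor advantage of making the compatibility $\varphi_{|H_{nP'}|}\circ\widetilde\tau = \psi\circ\varphi_{|H_{nP}|}$ (and hence the matching of the image cones) completely explicit, which the paper leaves implicit.
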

\begin{proof}
Let $P, Q \in E$ be two distinct points. Then we have the ruled surfaces  $X_{nP} = \P(\O_E \oplus\O_E(-nP))$ and  $X_{nQ} = \P(\O_E \oplus\O_E(-nQ))$ over $E$ and the following diagram:
\begin{align*}
    \begin{tikzcd}[ampersand replacement=\&, column sep = large, row sep = small]
        X_{nP} \arrow[r,"\varphi_{|nP|}"] \& \P(\Ho^0(\O_{X_{nP}}(H_{nP}))^{\vee}) \arrow[swap,
       start anchor={[shift={(-3pt,9pt)}]south east},
       end anchor={[shift={(2pt,-4pt)}]north west}, 
       dr,"\psi_1"']\\       
        {} \& {} \& \P^n\\
        X_{nQ} \arrow[r,"\varphi_{|nP|}"] \& \P(\Ho^0(\O_{X_{nQ}}(H_{nQ}))^{\vee})  \arrow[swap,
       start anchor={[shift={(-2pt,10pt)}]south east},
       end anchor={[shift={(2pt,-13pt)}]north west},
       ur,"\psi_2"]
   \end{tikzcd}
\end{align*}
with $\psi_1$, $\psi_2$ isomorphisms.

Let $f: E \ra E$ be the automorphism given by the translation which sends $P$ to $Q$. $f$ induces an isomorphism between $\Ho^0(\O_E(nP))$ and $\Ho^0(\O_E(nQ))$. As in the proof of \ref{prop_ruled_lin_sys} we have
\begin{align*}
 \Ho^0(\O_{X_{nP}}(H_{nP})) \simeq \Ho^0(\O_E) \oplus \Ho^0(\O_E(nP))
\end{align*}
and
\begin{align*}
\Ho^0(\O_{X_{nQ}}(H_{nQ})) \simeq \Ho^0(\O_E) \oplus \Ho^0(\O_E(nQ))
\end{align*}
As a consequence we get an isomorphism
\begin{align*}
\Tilde{f}: \P(\Ho^0(\O_{X_{nP}}(H_{nP}))^{\vee}) \ra \P(\Ho^0(\O_{X_{nQ}}(H_{nQ}))^{\vee})
\end{align*}
and $\psi_2 \circ \Tilde{f} \circ \psi_1^{-1}$ gives the projective equivalence in $\P^n$ between the images of $\psi_1 \circ \varphi_{|nP|}$ and $\psi_2 \circ \varphi_{|nQ|}$.
\end{proof}

We conclude this section with the following result:

\begin{thm}
The reduced locus $H_c$ of elliptic normal cones in the Hilbert scheme $\Hilb_{Q_n,\P^n}$ is irreducible and unirational.
\end{thm}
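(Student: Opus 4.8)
The plan is to produce a dominant rational map onto $H_c$ whose source is a rational variety; irreducibility of $H_c$ then follows automatically, $H_c$ being the closure of the image of an irreducible variety. The earlier results reduce an elliptic normal cone in $\P^n$ to two ingredients, each manifestly rational: a genus $1$ curve $E$ together with a point $P \in E$ (which we may take to be the origin of a group law), and a projective identification of the intrinsic target $\P(\Ho^0(\O_{X_{nP}}(H_{nP}))^{\vee})$ of Proposition \ref{prop_ruled_lin_sys} with $\P^n$. For the first I would use the Weierstrass family: let $B \subseteq \mathbb{A}^2$ be the open (hence rational and irreducible) subvariety $\{(a,b) : 4a^3 + 27 b^2 \neq 0\}$ and $\rho \colon \mathcal{E} \ra B$ the family with fibre $E_{a,b} = \{y^2 z = x^3 + a x z^2 + b z^3\} \subseteq \P^2$ and zero section $\sigma_0 = [0:1:0]$; every genus $1$ curve over $\mathbb{C}$ occurs as some $E_{a,b}$.

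Next I would globalise Section 3 over $B$. Put $\mathcal{L} = \O_{\mathcal{E}}(n\sigma_0)$, form $\mathcal{F} = \O_{\mathcal{E}} \oplus \mathcal{L}^{-1}$ and the relative ruled surface $p \colon \mathcal{X} = \P_{\mathcal{E}}(\mathcal{F}) \ra \mathcal{E}$, so that $\pi := \rho \circ p \colon \mathcal{X} \ra B$ has fibre over $(a,b)$ the surface $X_{nP}$ of Proposition \ref{prop_ruled_lin_sys} with $P = \sigma_0(a,b)$. There is a line bundle $\O_{\mathcal{X}}(\mathcal{H})$ restricting to $\O_{X_{nP}}(H_{nP})$ on each fibre. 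Since $\h^i(\O_{X_{nP}}(H_{nP}))$ is independent of $(a,b)$ --- it equals $n+1$ for $i = 0$ and $1$ for $i = 1$, as in the proof of Proposition \ref{prop_ruled_lin_sys} --- cohomology and base change shows that $\mathcal{V} := \pi_{\ast}\O_{\mathcal{X}}(\mathcal{H})$ is locally free of rank $n+1$ and its formation commutes with base change. By Proposition \ref{prop_ruled_lin_sys} the fibrewise system $|H_{nP}|$ is base-point-free, so evaluation yields a $B$-morphism $\mathcal{X} \ra \P_B(\mathcal{V}^{\vee})$ (with the conventions of Section 3) whose image $\overline{\mathscr{C}} \subseteq \P_B(\mathcal{V}^{\vee})$ is fibrewise the elliptic normal cone attached to $E_{a,b}$; hence $\overline{\mathscr{C}} \ra B$ is flat with Hilbert polynomial $Q_n$ by the Hilbert polynomial computation of Section 3.

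Now I would rigidify the target. Let $\mathcal{I} \ra B$ be the frame bundle of $\mathcal{V}$, a $\mathrm{GL}_{n+1}$-torsor; it is Zariski-locally trivial, so $\mathcal{I}$ is birational to $B \times \mathrm{GL}_{n+1}$, hence rational and irreducible. Over $\mathcal{I}$ the pulled-back bundle $\mathcal{V}$ is canonically trivial, so $\P_{\mathcal{I}}(\mathcal{V}^{\vee}) \cong \P^n \times \mathcal{I}$ canonically, and the pulled-back family $\overline{\mathscr{C}}_{\mathcal{I}}$ becomes a flat family of closed subschemes of a \emph{fixed} $\P^n$ with Hilbert polynomial $Q_n$, hence is classified by a morphism $\Phi \colon \mathcal{I} \ra \Hilb_{Q_n,\P^n}$. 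Every fibre of $\Phi$ is an elliptic normal cone, so $\Phi$ factors through $H_c$. For dominance, take $[\mathscr{C}_0] \in H_c$: the cone $\mathscr{C}_0$ lies over some $E$, and choosing $(a,b)$ with $E_{a,b} \cong E$ and recalling from the discussion after Proposition \ref{prop_ruled_lin_sys} (together with the subsequent lemma and proposition) that $\mathscr{C}_0$ is the image of $X_{nP}$ under $\varphi_{|H_{nP}|}$ followed by some isomorphism $\P(\Ho^0(\O_{X_{nP}}(H_{nP}))^{\vee}) \cong \P^n$ --- that is, by some trivialisation of $\mathcal{V}_{(a,b)}$ --- we obtain $[\mathscr{C}_0] \in \Phi(\mathcal{I})$. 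Thus $\Phi$ maps onto $H_c$, so $H_c$ is dominated by the rational variety $\mathcal{I}$ and is therefore unirational, and it is irreducible, being the closure of the irreducible set $\Phi(\mathcal{I})$.

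I expect the main obstacle to be bookkeeping rather than anything conceptual: making the family versions of the fibrewise statements of Section 3 precise --- that $\mathcal{H}$, $\mathcal{V}$, and the relative morphism to $\P_B(\mathcal{V}^{\vee})$ are well behaved under base change (here cohomology and base change, fed by the constancy of the cohomology dimensions noted in Proposition \ref{prop_ruled_lin_sys}, does the work), and keeping the $\P$-bundle conventions straight so that trivialising $\mathcal{V}$ genuinely realises $\overline{\mathscr{C}}_{\mathcal{I}}$ inside one fixed $\P^n$. Equivalently, the last step may be run via the $\mathrm{PGL}_{n+1}$-action on $\Hilb_{Q_n,\P^n}$: a local trivialisation of $\mathcal{V}$ over a dense open $U \subseteq B$ gives a rational family $U \ra H_c$, $u \mapsto \mathscr{C}_u$, of ``standard'' cones, and $(g,u) \mapsto g \cdot \mathscr{C}_u$ is a map $\mathrm{PGL}_{n+1} \times U \ra H_c$ from a rational variety whose image sweeps out $H_c$, since every elliptic normal cone is projectively equivalent to one of the $\mathscr{C}_u$.
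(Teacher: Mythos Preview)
Your argument is correct and follows essentially the same strategy as the paper: parametrize pointed genus $1$ curves by a rational base (you use the Weierstrass family over $B\subseteq\mathbb{A}^2$, the paper uses the Legendre family over $\mathbb{A}^1\setminus\{0,1\}$), push forward the relevant line bundle, pass to the frame bundle to rigidify the target projective space, and obtain a dominant rational map from a rational variety to $H_c$. The only substantive difference is that you build the family of cones directly inside $\P_B(\mathcal{V}^\vee)$ and invoke constancy of the Hilbert polynomial for flatness, whereas the paper forms an incidence scheme in $\Sigma\times\P^n$ and appeals to generic flatness and generic reducedness; your closing remark about trivializing over a dense open $U$ and acting by $\Pgl_{n+1}$ is in fact precisely the paper's viewpoint.
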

\begin{proof}
We will start by constructing a space which parametrizes tuples of type $(E,P,s_1, \ldots , s_n)$, where $(E,P)$ is an elliptic curve and $s_1, \ldots , s_n$ is a basis of $\Ho^0(E, \O_E(nP))$. We have the following smooth, projective family of genus $1$ curves:\\
\begin{align*}
    \begin{tikzcd}[ampersand replacement=\&]
        \mathcal{C} = \{ zy^2 = x(x - z)(x - \lambda z) \} \arrow[d,"\pi"'] \arrow[hookrightarrow]{r} \& \P^2 \times A \\
        A \& {}\\
   \end{tikzcd}
\end{align*}
where $A = \mathbb{A}^1_{\lambda} \setminus \{0,1\}$. $\pi$ admits a section $P$ which sends $\lambda$ to $([0:1:0], \lambda )$. Due to this we can define a line bundle on $\mathcal{C}$, $\O_{\mathcal{C}}(nP)$, whose restriction to a fiber of $\pi$ over $\lambda_0$ is $\O_{\mathcal{C}_{\lambda_0}}(nP( \lambda_0 ))$, a degree $n$ line bundle on the genus $1$ curve $\mathcal{C}_{\lambda_0}$. By applying Grauert's theorem (see \cite[III.12.9]{book_hartshorne_1977}), we obtain the rank $n$ bundle $\mathcal{V}_n = \pi_{\ast} \O_{\mathcal{C}}(nP)$ on $A$. We then take the global $\Spec$ of the sheaf of $\O_A$-algebras $\Sym ( \Hom( \O_A^{\oplus n} , \mathcal{V}_n )^{\vee}) $, \textit{i.e.} the geometric vector bundle associated to $\Hom( \O_A^{\oplus n} , \mathcal{V}_n )$. This scheme parametrizes tuples of type $(E,P,s_1, \ldots , s_n)$, where $s_1, \ldots , s_n$ are elements of $\Ho^0(\O_E(nP))$. The one we mentioned in the beginning, where the $s_i$'s form a basis, is an open subscheme which we will denote by $\Sigma$. Since $A$ is rational, it is clear that $\Sigma$ is rational as well.

Before going further, let us simplify the notation by writing $s$ instead of $s_1, \ldots , s_n$ for a basis of $\Ho^0(\O_E(nP))$. Moreover, note that to any $s$ we can associate an isomorphism $\psi_s : \P(\Ho^0(\O_{X_{nP}}(H_{nP}))^{\vee}) \ra \P^n$ (of course, this correspondence is only surjective).

Let $\Xi$ be the following incidence scheme taken with its reduced structure:
\begin{align*}
    \Xi = \{ (E,P,s,z) \in \Sigma \times \P^n | z \in \psi_s(\varphi_{|H_{nP}|}(X_{nP})) \} \subseteq \Sigma \times \P^n
\end{align*}
To make the notation clearer, to any tuple $(E,P,s)$ we can associate the ruled surface $X_{nP}$ over $E$, the morphism $\varphi_{|nP|}$ to $\P(\Ho^0(\O_{X_{nP}}(H_{nP}))^{\vee})$ whose image is an elliptic normal cone and the isomorphism of projective spaces $\psi_s$. We compose $\varphi_{|nP|}$ with $\psi_s$ in order to have all the cones that we obtain in the same projective space. In $\Xi$, the points $z$ belong to these elliptic normal cones in $\P^n$. We use all the isomorphisms $\psi_s$ because, due to the previous proposition, we obtain in this way all the possible elliptic normal cones of $\P^n$.

Now consider the projection $\Xi \ra \Sigma$. By generic flatness (see \cite[Theorem 6.9.1, pg. 153]{book_ega_4pt2}), we can work over an open dense subset $U$ of $\Sigma$ in order for the projection $\Xi_U \ra U$ to be flat. \textit{A priori} there is no reason for $\Xi_U \ra U$ to have all the scheme-theoretic fibers reduced, but over an open subset $U'$ of $U$ they are (see \cite[Theorem 12.2.4, pg. 183]{book_ega_4pt3}) and the generic fiber is reduced since $\Xi$ is a variety. This gives us a family of elliptic normal cones in $\P^n$ over an open subset of $\Sigma$. Therefore, we obtain a dominant rational map $\Sigma \dashedrightarrow H_c$. As $\Sigma$ is irreducible and rational, we get that $H_c$ is irreducible and unirational.
\end{proof}

\begin{rem}
The construction of $\Xi$ can also be used to prove the unirationality of the open subscheme of $\Hilb_{nt,\P^{n-1}}$ which parametrizes elliptic normal curves in $\P^{n-1}$ (let us denote this open subscheme by $H_e$). Namely, if we would consider just like in the previous proof:
\begin{align*}
    \Xi' = \{ (E,P,s,z) \in \Sigma \times \P^{n-1} | z \in \psi_s(\varphi_{|nP|}(E)) \} \subseteq \Sigma \times \P^{n-1}
\end{align*}
where this time $\varphi_{|nP|}$ embeds $E$ in $\P(\Ho^0(\O_E(nP))^{\vee})$ and $\psi_s$ is an isomorphism between  $\P(\Ho^0(\O_{E}(nP))^{\vee})$ and $\P^{n-1}$, then by the same arguments we can obtain a dominant rational map $\Sigma \dashedrightarrow H_e$.
\end{rem}

\section{Main result}

As mentioned before, the space of quadrics containing a bielliptic canonical curve of genus $g$ is determined modulo the ones vanishing on the unique elliptic normal cone which contains it. What we need next is a projective bundle over $H_c$ whose fiber over a point $[\mathscr{C}]$ is isomorphic to the the projectivisation of the space $\Ho^0(\O_{\P^n}(2))/\Ho^0(\I_{\mathscr{C}}(2))$ (this vector space is isomorphic to $\Ho^0(\O_{\mathscr{C}}(2))$ due to the projective normality of the cones). The following proposition shows that the dimension of this space is constant for any elliptic normal cone $\mathscr{C}$ in $\P^n$.

\begin{prop}
Let $\mathscr{C} \subseteq \P^n$ be an elliptic normal cone over $E$. Then \\$\h^0(\O_{\mathscr{C}}(2)) = 3n + 1$.
\end{prop}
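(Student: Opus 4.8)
The plan is to compute $\h^0(\O_{\mathscr{C}}(2))$ on the ruled surface that resolves $\mathscr{C}$, where the question reduces to a pushforward down to the base elliptic curve.

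First, since $\mathscr{C}$ is normal (Proposition \ref{prop_proj_norm}), the blowup $f\colon X \ra \mathscr{C}$ of the vertex satisfies $f_{\ast}\O_X = \O_{\mathscr{C}}$, and as explained in the previous subsection $X = \P(\O_E \oplus \O_E(-D))$ with $\deg D = n$. By Proposition \ref{prop_ruled_lin_sys}, $f$ coincides, up to an isomorphism of $\P^n$ onto its image, with the morphism $\varphi_{|H_D|}$ attached to $H_D = C_0 + Df$; hence $f^{\ast}\O_{\mathscr{C}}(1) = \O_X(H_D)$ and so $f^{\ast}\O_{\mathscr{C}}(2) = \O_X(2H_D)$. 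The projection formula then gives $f_{\ast}\O_X(2H_D) = \O_{\mathscr{C}}(2) \otimes f_{\ast}\O_X = \O_{\mathscr{C}}(2)$, whence $\h^0(\O_{\mathscr{C}}(2)) = \h^0\bigl(X, \O_X(2H_D)\bigr) = \h^0\bigl(E, \pi_{\ast}\O_X(2H_D)\bigr)$, where $\pi\colon X \ra E$ is the ruling.

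Second, I would compute the pushforward. Writing $2H_D = 2C_0 + 2Df$, the same computation as in the proof of Proposition \ref{prop_ruled_lin_sys}, with $\Sym^2$ replacing $\Sym^1$, yields $\pi_{\ast}\O_X(2H_D) = \Sym^2(\O_E \oplus \O_E(-D)) \otimes \O_E(2D) = \O_E(2D) \oplus \O_E(D) \oplus \O_E$. Since $D$ has positive degree $n$ on the genus $1$ curve $E$, Riemann--Roch gives $\h^0(\O_E(2D)) = 2n$, $\h^0(\O_E(D)) = n$, and $\h^0(\O_E) = 1$; summing, $\h^0(\O_{\mathscr{C}}(2)) = 2n + n + 1 = 3n+1$.

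A shorter alternative avoids $X$: apply $\h^0(-)$ to the graded exact sequence $0 \ra (S/I)(-1) \ra S/I \ra (S/I)/X_n(S/I) \ra 0$ from the proof of the Hilbert polynomial proposition. Since $I$ is prime, multiplication by $X_n$ is injective, and since $\mathscr{C}$ and the elliptic normal curve $E$ are both projectively normal, reading the dimensions in degree $2$ gives $\h^0(\O_{\mathscr{C}}(2)) = \h^0(\O_{\mathscr{C}}(1)) + \h^0(\O_E(2)) = (n+1) + 2n = 3n+1$, using that $\mathscr{C}$ is projectively normal and non-degenerate in $\P^n$ together with Riemann--Roch on $E$. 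I do not expect a real obstacle here; the only point requiring care is that the vertex is a non-rational (simple elliptic) singularity, so $R^1 f_{\ast}\O_X \neq 0$ and one must not transport higher cohomology naively from $X$ to $\mathscr{C}$ — but $\h^0$ is unaffected, which is all that is used, and this is also why one computes the Hilbert function directly rather than evaluating the Hilbert polynomial at $t=2$.
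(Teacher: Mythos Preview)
Both of your arguments are correct, and neither is the route the paper takes. The paper instead exploits that the homogeneous ideal of the cone $\mathscr{C}\subseteq\P^n$ coincides with that of the base $E\subseteq\P^{n-1}$, so $\h^0(\I_{\mathscr{C}}(2))=\h^0(\I_E(2))$; it then reads off $\h^0(\I_E(2))$ from the short exact sequence $0\to\I_E(2)\to\O_{\P^{n-1}}(2)\to\O_E(2)\to 0$ together with projective normality of $E$, and finally subtracts from $\h^0(\O_{\P^n}(2))$ using projective normality of $\mathscr{C}$. Your ruled-surface computation is more geometric and meshes well with the analysis of $|H_D|$ already carried out in the paper; as a bonus it yields the explicit splitting $\Ho^0(\O_{\mathscr{C}}(2))\simeq\Ho^0(\O_E(2D))\oplus\Ho^0(\O_E(D))\oplus\Ho^0(\O_E)$, which is more informative than the bare dimension count. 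Your alternative via the degree-$2$ piece of the graded exact sequence is really the paper's argument with the detour through $\h^0(\I_E(2))$ removed: both rest on the same two ingredients (projective normality and the cone/base relation on coordinate rings), but you read off $\h^0(\O_{\mathscr{C}}(2))$ directly from the recursion rather than first isolating the quadrics in the ideal. One small terminological quibble: for $n\geq 10$ the vertex is an elliptic (minimally elliptic) singularity but not a \emph{simple} elliptic one in the usual sense; your substantive point that $R^1f_{\ast}\O_X\neq 0$ and hence the Hilbert polynomial cannot be naively evaluated is nonetheless well taken.
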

\begin{proof}
Notice that the vector spaces $\Ho^0(\P^n,\I_{\mathscr{C}}(2))$ and $\Ho^0(\P^{n-1},\I_{E}(2))$ have the same dimension, so we can do the computations for $E$. We start with the following exact sequence:
\begin{align*}
    0 \ra \I_{E}(2) \ra \O_{\P^{n-1}}(2) \ra \O_{E}(2) \ra 0
\end{align*}
$\h^1(\I_{E}(2)) = 0$ by the projective normality of $E$. $E$ is embedded via a line bundle of degree $n$, so by Riemann-Roch $\h^0(\O_{E}(2)) = 2n$. Therefore:
\begin{align*}
    \h^0(\I_{\mathscr{C}}(2)) = \h^0(\O_{\P^{n-1}}(2)) - \h^0(\O_{E}(2)) = \frac{n(n+1)}{2} - 2n = \frac{n^2 - 3n}{2}
\end{align*}
and:
\begin{align*}
    \begin{split}
        \h^0(\O_{\mathscr{C}}(2)) &= \h^0(\O_{\P^n}(2)) - \h^0(\I_{\mathscr{C}}(2))\\
        &= \binom{n+2}{2} - \frac{n^2 - 3n}{2} = 3n + 1
    \end{split}
\end{align*}
\end{proof}

In the case $n = g - 1$, so $g \geq 11$, for a fixed elliptic normal cone $\mathscr{C} \subseteq \P^{g-1}$ the space of quadrics modulo the ones vanishing on $\mathscr{C}$ is $\P(\Ho^0(\mathscr{C},\O_{\mathscr{C}}(2))$, a projective space of dimension $3g-2$. Since the bielliptic canoncal curves contained in $\mathscr{C}$ are intersections of $\mathscr{C}$ with a quardic, a parameter space for them would be a projective bundle on $H_c$, whose fiber over a point $[\mathscr{C}]$ would be $\P(\Ho^0(\mathscr{C},\O_{\mathscr{C}}(2))$. This is the strategy we will employ in order to prove the main result.
\begin{thm}
	The bielliptic locus in the Hilbert scheme of canonical curves of genus $g \geq 11$ is unirational. Furthermore, the bielliptic locus in $M_g$ is unirational for $g \geq 11$.
\end{thm}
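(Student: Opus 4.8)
The plan is to construct a rational variety $\mathcal{P}$, namely a projective bundle over a dense open subset of the parameter space $\Sigma$ built in the previous section, together with a dominant rational map from $\mathcal{P}$ to the locus $\mathcal{B}\subseteq\Hilb_{(2g-2)t+1-g,\P^{g-1}}$ of bielliptic canonical curves. Throughout set $n:=g-1$, so $n\geq 10$. Recall that the incidence scheme $\Xi$ of the previous section provides, over a dense open $U\subseteq\Sigma$ (with $\Sigma$ rational), a flat family $p\colon\mathscr{X}\to U$ whose fibres are the elliptic normal cones $\mathscr{C}_u\subseteq\P^n$. Writing $\O_{\mathscr{X}}(2)$ for the restriction of $\O_{\P^n}(2)$, I would first note that, since $\h^0(\O_{\mathscr{C}}(2))=3n+1$ is independent of the cone $\mathscr{C}$ by the preceding proposition, Grauert's theorem (\cite[III.12.9]{book_hartshorne_1977}) shows that after shrinking $U$ the sheaf $\mathcal{E}:=p_{\ast}\O_{\mathscr{X}}(2)$ is locally free of rank $3n+1$, its formation commuting with base change. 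I would then set $\mathcal{P}:=\P(\mathcal{E})$, a $\P^{3n}$-bundle over $U$. Being the projectivisation of a vector bundle it is Zariski-locally trivial, hence birational to $U\times\P^{3n}$; since $U$ is open in the rational variety $\Sigma$, this makes $\mathcal{P}$ rational.

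Next I would set up the quadric-section map. A point of $\mathcal{P}$ over $u\in U$ is a line in $\Ho^0(\O_{\mathscr{C}_u}(2))\simeq\Ho^0(\O_{\P^n}(2))/\Ho^0(\I_{\mathscr{C}_u}(2))$; choosing a representative quadric $Q$, the subscheme $\mathscr{C}_u\cap Q\subseteq\P^n$ depends only on the point of $\mathcal{P}$, because modifying $Q$ by a quadric vanishing on $\mathscr{C}_u$ does not change the intersection. From the Koszul sequence $0\to\O_{\mathscr{C}_u}(-2)\to\O_{\mathscr{C}_u}\to\O_{\mathscr{C}_u\cap Q}\to 0$ its Hilbert polynomial is $Q_n(t)-Q_n(t-2)=(2g-2)t+1-g$. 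Since $\O_{\mathscr{C}_u}(2)$ is very ample on the irreducible normal surface $\mathscr{C}_u$, whose only singular point is the vertex, Bertini shows that for general $u$ and general representing quadric (which then avoids the vertex) the intersection is a smooth irreducible curve. Hence over a dense open $\mathcal{P}^{\circ}\subseteq\mathcal{P}$ the universal intersection is flat with constant Hilbert polynomial, and $(u,[Q])\mapsto[\mathscr{C}_u\cap Q]$ defines a morphism $\Phi\colon\mathcal{P}^{\circ}\to\Hilb_{(2g-2)t+1-g,\P^{g-1}}$.

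The heart of the argument is to show $\Phi$ is dominant onto $\mathcal{B}$. For the inclusion $\Phi(\mathcal{P}^{\circ})\subseteq\mathcal{B}$, I would check that a general curve $C:=\mathscr{C}_u\cap Q$ is bielliptic and canonically embedded: it is smooth irreducible of genus $g$; it is non-degenerate in $\P^{g-1}$ because any hyperplane section of the surface $\mathscr{C}_u$ has degree $n<2g-2=\deg C$; a non-degenerate smooth curve of genus $g$ and degree $2g-2$ in $\P^{g-1}$ is canonically embedded, since Clifford's theorem forces $\h^0(\O_C(1))=g$, whence $\h^1(\O_C(1))=1$ and Riemann--Roch gives $\O_C(1)\simeq K_C$; and projecting $\mathscr{C}_u$ from its vertex restricts on $C$ to a degree-$2$ morphism onto the underlying elliptic curve, as a ruling meets $Q$ in two points. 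For the reverse inclusion I would invoke the structure result of Section 2: every bielliptic canonical curve of genus $\geq 6$ is the intersection of its unique elliptic normal cone with a quadric, and that cone is a point of $H_c$. Together with the observation just made --- a general quadric section of an arbitrary cone $\mathscr{C}$ is a bielliptic canonical curve whose cone is $\mathscr{C}$ --- this shows the map $\mathcal{B}\to H_c$ sending a curve to its cone is dominant; and since $\Sigma\dashedrightarrow H_c$ is dominant, the image of $U$ is dense in $H_c$. Thus a general $[C']\in\mathcal{B}$ has cone $\mathscr{C}_u$ for some $u\in U$ and so lies in the image of $\Phi$. Because $\mathcal{B}$ is irreducible --- its image in $M_g$ being an irreducible component of $\Sing(M_g)$ by \cite{art_cornalba_87} --- it follows that $\Phi$ is dominant, and since $\mathcal{P}$ is rational, $\mathcal{B}$ is unirational. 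For the last assertion I would compose with the moduli map $\mathcal{B}\dashedrightarrow M_g$ induced by the universal curve over the Hilbert scheme; its image is the bielliptic locus in $M_g$ and it is dominant onto it (every genus-$g$ bielliptic curve with $g\geq 11$ is non-hyperelliptic, hence has a canonical model), so that locus is unirational too.

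I expect the main obstacle to be the first step: arranging a flat family of elliptic normal cones over a rational base and verifying, via Grauert's theorem together with the constancy of $\h^0(\O_{\mathscr{C}}(2))$ established earlier, that $p_{\ast}\O_{\mathscr{X}}(2)$ is a vector bundle --- so that $\mathcal{P}$ is at once a projective bundle and a rational variety. The remaining ingredients --- the Bertini smoothness argument and the Clifford/Riemann--Roch identification of the general quadric section as a bielliptic canonical curve, together with the uniqueness of the cone from the structure result --- are routine.
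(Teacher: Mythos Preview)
Your proof is correct and follows the same overall strategy as the paper: build a projective bundle whose points parametrize pairs (elliptic normal cone, quadric not containing it), then map to the Hilbert scheme via the intersection. The one substantive difference is the base of the bundle. The paper forms $\mathcal{P}=\P(q_{\ast}p^{\ast}\O(2))$ over the unirational locus $H_c$ itself, using the universal family of cones there, and so concludes only that $\mathcal{P}$ is unirational. You instead pull everything back to an open $U\subseteq\Sigma$ via the family coming from $\Xi$; since $\Sigma$ is rational and the projectivisation of a vector bundle is Zariski-locally trivial, your $\mathcal{P}$ is in fact rational---a mild sharpening at no extra cost, which also lets you bypass any question of local triviality over $H_c$. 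You further supply several verifications the paper leaves implicit (the Koszul computation of the Hilbert polynomial of $\mathscr{C}\cap Q$, the Bertini argument away from the vertex, the Clifford/Riemann--Roch identification $\O_C(1)\simeq K_C$, and an explicit dominance argument invoking irreducibility of $\mathcal{B}$), whereas the paper simply asserts that the resulting family consists of bielliptic canonical curves and that the induced map dominates the bielliptic locus.
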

\begin{proof}
Let $q: \mathcal{C} \ra H_c$ be the universal family of elliptic normal cones in $\P^{g-1}$. We have the inclusion $i: \mathcal{C} \ra \P^{g-1} \times H_c$ and $p: \mathcal{C} \ra \P^{g-1}$ the composition of the first projection and $i$ (see diagram below).
\begin{align*}
    \begin{tikzcd}[ampersand replacement=\&]
        \mathcal{C} \arrow[d,"q"'] \arrow[r,"i"] \arrow[bend left, rr,"p"] \& \P^{g-1} \times H_c \arrow[dl] \arrow[r] \& \P^{g-1} \\
        H_c \& {} \& {} \\
   \end{tikzcd}
\end{align*}
Consider the bundle $p^{\ast}\O(2)$ on $\mathcal{C}$ which restricts over a fiber of $q$ (which is an elliptic normal cone $\mathscr{C}$ in $\P^{g-1}$) to $\O_{\mathscr{C}}(2)$, then take $q_{\ast}p^{\ast}\O(2)$. The base of the family is integral and the cohomology on the fibers of $q$, \textit{i.e.} $\Ho^0(\mathscr{C},\O_{\mathscr{C}}(2))$, has constant dimension $3g-2$ from the previous proposition, therefore by Grauert's theorem $q_{\ast}p^{\ast}\O(2)$ is locally free of rank $3g-2$ and for all points $[\mathscr{C}] \in H_c$ the fiber of $q_{\ast}p^{\ast}\O(2)$ over $[\mathscr{C}]$ is isomorphic to $\Ho^0(\mathscr{C},\O_{\mathscr{C}}(2))$, which is exactly what we needed. Now take its projectivisation $\mathcal{P} = \P(q_{\ast}p^{\ast}\O(2))$ and this is the projective bundle we were looking for. It is a projective bundle over $H_c$ whose points parametrize pairs $(\mathscr{C},Q)$, where $\mathscr{C} \subseteq \P^{g-1}$ is an elliptic normal cone and $Q \subseteq \P^{g-1}$ is a quadric hypersurface not containing $\mathscr{C}$. Consider the incidence scheme (again with the reduced structure):
\begin{align*}
\Phi = \{ (\mathscr{C},Q,z) | z \in \mathscr{C} \cap Q \} \subseteq \mathcal{P} \times \P^{g-1}
\end{align*}
and the first projection $\Phi \ra \mathcal{P}$. We can reduce again to an open dense subset of $\mathcal{P}$ so that we obtain a flat family, then again to have reduced, nonsingular fibers. We thus obtain a flat family of bielliptic canonical curves in $\P^{g-1}$, which induces a rational map $\mathcal{P} \dashedrightarrow \Hilb_{(2g-2)t + 1 - g, \P^{g-1}}$ dominating the locus of bielliptic curves in the Hilbert scheme of canonical curves.

Since $H_c$ is unirational, $\mathcal{P}$ is unirational as well, and because of the map $\mathcal{P} \dashedrightarrow \Hilb_{(2g-2)t + 1 - g, \P^{g-1}}$ we can conclude that the locus of bielliptic curves in the Hilbert scheme of canonical curves is unirational. Now the unirationality of the bielliptic locus in $M_g$ is automatic because the previously mentioned locus maps to the bielliptic locus in $M_g$.
\end{proof}

\medskip


\begin{thebibliography}{99}

\bibitem[ABS19]{art_aprodu_bruno_sernesi_2019}
Aprodu, M., Bruno, A., Sernesi, E.
\textit{A characterization of bielliptic curves via syzygy schemes}.
J. Pure Appl. Algebra 223, No. 1, 4689--4700 (2019).
\url{https://doi.org/10.1016/j.jpaa.2019.02.011}

\bibitem[ACGH1]{book_acgh1}
Arbarello, E., Cornalba, M., Griffiths, P. A., Harris, J. 
\textit{Geometry of Algebraic Curves Volume I}.
Springer-Verlag New York, Grundlehren der mathematischen Wissenschaften (1985).

\bibitem[AM69]{book_am_69}
Atiyah, M. F., Macdonald, I. G.
\textit{Introduction to Commutative Algebra}.
Addison-Wesley Publishing Company (1969).

\bibitem[BD90]{art_biell_g4}
Bardelli, F., Del Centina, A.
\textit{The moduli space of genus four double covers of elliptic curves is rational}.
Pac. J. Math. 144, No. 2, 219--227 (1990). 
\url{https://doi.org/10.2140/pjm.1990.144.219}

\bibitem[BD99]{art_biell_g3}
Bardelli, F., Del Centina, A.
\textit{Bielliptic curves of genus three: canonical models and moduli space}. 
Indag. Math. (N.S.) 10, No. 2, 183--190 (1999). 
\url{https://doi.org/10.1016/S0019-3577(99)80015-9}

\bibitem[CD96]{art_biell_g5}
Casnati, G., Del Centina, A.
\textit{The Rationality of the Moduli Space of Bielliptic Curves of Genus Five}.
Bull. Lond. Math. Soc. 28, 356--362 (1996). 
\url{https://doi.org/10.1112/blms/28.4.356}

\bibitem[CD02]{art_biell_g6}
Casnati, G., Del Centina, A. 
\textit{A characterization of bielliptic curves and applications to their moduli spaces}. 
Ann. Mat. Pura Appl. 181, 213--221 (2002). 
\url{https://doi.org/10.1007/s102310100036}

\bibitem[Cat10]{art_catanese_2010}
Catanese, F.
\textit{Irreducibility of the space of cyclic covers of algebraic curves of fixed numerical type and the irreducible components of
$Sing (\overline{\mathfrak M_g})$}.
\href{https://arxiv.org/abs/1011.0316}{arXiv:1011.0316}

\bibitem[Cor87]{art_cornalba_87}
Cornalba, M. 
\textit{On the locus of curves with automorphisms}. 
Ann. Mat. Pura Appl. 149, 135--151 (1987). 
\url{https://doi.org/10.1007/BF01773930}

\bibitem[Fri98]{book_friedman_1998}
Friedman, R. 
\textit{Algebraic surfaces and holomorphic vector bundles}.
Springer New York, Universitext (1998).

\bibitem[GH94]{book_gh_94}
Griffiths, P., Harris, J.
\textit{Principles of Algebraic Geometry}.
John Wiley \& Sons, Inc., Wiley Classics Library (1994).

\bibitem[EGAIV2]{book_ega_4pt2}
Grothendieck, A. 
\textit{Éléments de géométrie algébrique : IV. Étude locale des schémas et des morphismes de schémas, Seconde partie}. 
Publ. Math. IHÉS, Volume 24 (1965), pp. 5-231.

\bibitem[EGAIV3]{book_ega_4pt3}
Grothendieck, A. 
\textit{Éléments de géométrie algébrique: IV. Étude locale des schémas et des morphismes de schémas, Troisième partie}.
Publ. Math. IHÉS, Volume 28 (1966), pp. 5-255.

\bibitem[HM98]{book_hm_98}
Harris, J., Morrison, I.
\textit{Moduli of Curves}.
Springer New York, Graduate Texts in Mathematics (1998).

\bibitem[Har77]{book_hartshorne_1977}
Hartshorne, R. 
\textit{Algebraic Geometry}.
Springer New York, Graduate Texts in Mathematics (1977).

\bibitem[Hul86]{hulek_ell}
Hulek, K. 
\textit{Projective geometry of elliptic curves}. 
Astérisque, no. 137 (1986), 156 p.

\bibitem[Nag60]{nagata_rat_1}
Nagata, M. 
\textit{On Rational Surfaces I}.
Mem. Coll. Sci. Univ. Kyoto Ser. A Math. XXXII, No. 3, 351--370 (1960).
\url{https://doi.org/10.1215/kjm/1250776405}

\end{thebibliography}
\end{document}